\newlength\chartspace
\tikzset{arrow/.style={
-{Latex[length=1.5mm,    
width=1.5mm]},  
line width      =0.7pt,    
shorten        >=2pt,      
shorten        <=2pt,      
}}
\tikzset{dot/.style={
circle,
fill        =black,
inner sep   =0pt,
outer sep   =0pt,
minimum size=2*\dotsize cm,
}}
\tikzset{lbl/.style args={#1/#2}{
#1  =\labeloffset of #2,
font=\fontsize{0.3 cm}{0}\selectfont,  
}}
\def\MR#1{}
\patchcmd{\thebibliography}{\advance\leftmargin\labelsep}
{\labelsep=0.8em \advance\leftmargin\labelsep}{}{}
\newlength\openbractt
\newtheorem{theorem}{Theorem}
\newtheorem{lemma}[theorem]{Lemma}
\newtheorem{corollary}[theorem]{Corollary}
\newtheorem{proposition}[theorem]{Proposition}
\theoremstyle{definition}
\newtheorem{remark}[theorem]{Remark}
\newtheorem*{rep@theorem}{\rep@title}
\newcommand{\newreptheorem}[2]{%
\newenvironment{rep#1}[1]{%
\def\rep@title{#2 \ref{##1}}%
\begin{rep@theorem}}%
{\end{rep@theorem}}}
\renewcommand*{\leq}{\leqslant}
\renewcommand*{\le}{\leqslant}
\newcommand*{\co}{\colon\thinspace}
\newcommand*{\Z}{\mathbb{Z}}
\newcommand*{\F}{\mathbb{F}}
\newcommand*{\cs}{\mathbin{\sharp}}
\DeclareMathOperator{\CFK}{CFK}
\DeclareMathOperator{\HFK}{HFK}
\newcommand*{\CFKh}{\widehat{\CFK}}
\newcommand*{\HFKh}{\widehat{\HFK}}
\newcommand*{\be}{b_{\mathrm{e}}}
\newcommand*{\bo}{b_{\mathrm{o}}}
\begin{document}
\title{Using knot Floer invariants to detect prime knots}

\author{Samantha Allen}
\address{Department of Mathematics and Computer Science,  Duquesne University, Pittsburgh, PA 15282}
\email{\href{mailto:allens6@duq.edu}{allens6@duq.edu}}
\urladdr{\url{https://samantha-allen.github.io/}}

\author{Charles Livingston}
\address{Department of Mathematics, Indiana University, Bloomington, IN 47405}
\email{\href{mailto:livingst@indiana.edu}{livingst@indiana.edu}}
\urladdr{\url{https://livingst.pages.iu.edu/}}

\author{Misha Temkin}
\address{Department of Mathematics, Dartmouth College, Hanover, NH 03755}
\email{\href{mailto:misha.temkin@dartmouth.edu}{misha.temkin@dartmouth.edu}}
\urladdr{\url{https://math.dartmouth.edu/~mt/}}

\author{C.-M. Michael Wong}
\address{Department of Mathematics and Statistics, University of Ottawa, Ottawa, ON K1N 6N5 }
\email{\href{mailto:Mike.Wong@uOttawa.ca}{Mike.Wong@uOttawa.ca}}
\urladdr{\url{https://mysite.science.uottawa.ca/cwong/}}



\begin{abstract}  
We present knot primality tests that are built from knot Floer homology.  The most basic of these is a simply stated and    elementary consequence of Heegaard Floer theory:  if the knot Floer polynomial $\Omega_K(s,t)$ is irreducible, then $K$ is prime.   Improvements in this test yield a primality condition that is over 92\% effective in identifying prime knots of up to 15 crossings.   As another illustration of the strength of these tools, there are 1,315 non-hyperbolic prime knots  with crossing number 20 or less; the tests we develop   prove  the  primality of over 97\% of them.

The  filtered chain homotopy class  of $\CFKh (K)$ has a unique minimal-dimension representative
that is the  direct sum of a one-dimensional complex and
two-dimensional complexes, each of which can be assigned a  parity.  Let
$\delta (K)$, $\be (K)$, and $\bo (K)$  denote the   dimension of this minimal
representative  and the number of even and odd two-dimensional summands, respectively. For
a composite knot $K$, we observe that there is a non-trivial factorization $\delta (K) = d_1d_2$ satisfying
\[ (d_1 -1)(d_2-1) \le 4\min(\be(K), \bo(K)) .\]
This yields another knot primality test.  One corollary is a simple proof of Krcatovich's result that $L$-space knots are prime.

\end{abstract}

\maketitle


\section{Introduction}  The main tools for proving that a   knot $K \subset S^3$ is prime have been geometric, depending on hyperbolic geometry, incompressible surfaces, or geometric decompositions of the knot complement.  Background is provided in~\cite{MR3156509}; more recent discussions include~\cite{burton:LIPIcs:2020:12183, MR787823, Thistlethwaite2023}.   Here we will explore algebraic  knot primality tests based on Floer knot theory. For example, the most basic of these tests is a quick consequence of Heegaard Floer theory:  {\it If the two-variable knot Floer polynomial $\Omega_K(s,t)$ built from $\HFKh(K)$ is irreducible, then $K$ is prime.}

In the case that one has an alternating diagram for $K$,  Menasco~\cite{MR721450} gave simple necessary and sufficient conditions that determine if  $K$ is  prime.   More relevant to our work  is the fact that if a knot  is alternating, then the degree of its Alexander polynomial is twice its genus~\cite{MR99665, MR99664,MR1988285}.  As a consequence, there is a primality test for alternating knots $K$ in terms of the Alexander polynomial: {\it If  $K$ is alternating and $\Delta_K(t)$ is irreducible, then $K$ is prime.}  The primality tests we describe    generalize this to  all knots and are much stronger, even in the case of alternating knots.

We will be working with the  knot Floer   chain complex   $\CFKh(K)$ and its associated graded homology $\HFKh(K)$, first defined in~\cite{MR2065507}.  We will work exclusively with coefficients in the field with two elements, $\F_2$.  The computation of this complex is algorithmic~\cite{MR3771149,MR4002230}.   A program  written by Zolt\'an Szab\'o,  enhanced by
Jonathan Hanselman, and now installed in SnapPy~\cite{SnapPy} has been used to develop examples.   

In the next section we present  primality tests that are built using $\HFKh(K)$.   A complete survey of prime knots with  crossing number at most 15 and a large random sample of knots through 30 crossings indicate that these tests prove primality for over 92\% of prime knots. Surprisingly, the tests appear to be slightly stronger for non-alternating knots.
In Section~\ref{sec-bar} we consider the filtered chain complex $\CFKh(K)$, taking advantage of its differential as well as its homology.  The tests using this added structure are more specialized; so far they appear to be of  theoretical, rather than computational, interest.  For instance, this approach offers a simple proof of Krcatovich's theorem that $L$--space knots are prime~\cite{MR3404612}; see Corollary~\ref{cor-l-space}.

\subsection*{Acknowledgments} The authors thank Zolt\'an Szab\'o and Jonanthan Hanselman for their pointers regarding the use of Zolt\'an's knot Floer program.  The authors appreciate Nathan Dunfield's guidance regarding SnapPy.  Feedback from Peter Ozsv\'ath led us to further explore the case of alternating knots.  John Baldwin pointed out his work with Steven Sivek that showed that   $\HFKh(K)$ identifies the  knot $5_2$ and  the family of pretzel knots $\{P(-3,3,2n+1)\}$, among others; these examples  significantly enhance the strength of the primality tests presented here.

SA thanks the Wimmer Family Foundation for its support. CMMW acknowledges the support of the Natural Sciences and Engineering Research Council of Canada (NSERC), RGPIN-2023-05123.


\section{Knot Floer homology primality obstructions}
\label{sec-intro}

Let $\CFKh(K)$ denote the knot Floer homology complex~\cite{MR2065507} with coefficients in the field with two elements.  This is a finite-dimensional, graded, filtered chain complex that is well-defined up to filtered chain homotopy equivalence.  We let  $\CFKh(K, i)$ denote the subcomplex at filtration level $i \in \Z$.  The group $\HFKh_j(K, i)$ is the   homology of the quotient complex $\CFKh(K,i)/\CFKh(K,i-1)$ at grading  $j$.  We define the knot Floer polynomial by
\[ \Omega_K(s,t) = \sum \dim(\HFKh_j(K, i))s^j t^i .\]

A feature of knot Floer homology is that it is multiplicative:  $\CFKh(K \cs J) = \CFKh(K) \otimes \CFKh(J)$.  From this it follows that $\Omega_{K \cs J}(s,t) = \Omega_{K}(s,t) \Omega_{  J}(s,t) $.  (See~\cite{MR3591644} for a brief discussion of multiplicativity; we will present a few details in the appendix.)  Futhermore, Ozsv\'ath and Szab\'o proved in~\cite{MR2023281} that $\HFKh(K)$ detects the genus of a knot; in particular, $\Omega_K(s,t) = 1$ if and only if $K$ is the unknot.  Thus we have a basic primality test.

\begin{proposition} If $\Omega_K(s,t) $ is irreducible, then $K$ is prime. \end{proposition}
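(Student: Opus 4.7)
The plan is to prove the contrapositive: assume $K$ is composite and show that $\Omega_K(s,t)$ admits a non-trivial factorization. Write $K = K_1 \cs K_2$ with neither $K_1$ nor $K_2$ the unknot. By the multiplicativity of $\Omega$ stated just above the proposition, we have
\[ \Omega_K(s,t) = \Omega_{K_1}(s,t)\,\Omega_{K_2}(s,t) \]
in the Laurent polynomial ring over $\F_2$ in the variables $s$ and $t$. It remains only to verify that neither factor is a unit.

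The second ingredient is the Ozsv\'ath--Szab\'o genus detection result cited just before the proposition, which gives $\Omega_J(s,t)=1$ if and only if $J$ is the unknot. The first step is therefore to observe that each $\Omega_{K_i}(s,t) \ne 1$. The only work is in upgrading this to the statement that each $\Omega_{K_i}$ fails to be a unit in the ambient Laurent polynomial ring. A unit is a monomial $s^a t^b$, and any such monomial evaluates to $1$ at $(s,t) = (1,1)$, while $\Omega_J(1,1) = \dim_{\F_2} \HFKh(J)$; so being a unit forces $\dim_{\F_2} \HFKh(J) = 1$, and since $\Omega_J$ has non-negative integer coefficients this in fact forces $\Omega_J = 1$, whence $J$ is the unknot by the genus detection. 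Applying this to each $K_i$, neither $\Omega_{K_i}$ is a unit, so the displayed factorization is non-trivial, and $\Omega_K(s,t)$ is reducible.

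There is no real obstacle here, since both inputs (multiplicativity and genus detection) are invoked as black boxes established earlier in the section. The only mild subtlety is bookkeeping about what "irreducible" means for an element of $\F_2[s^{\pm 1}, t^{\pm 1}]$ and making sure the monomial units are ruled out by the genus detection, as explained above.
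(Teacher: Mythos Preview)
Your argument is correct and follows exactly the paper's approach: the proposition is deduced from multiplicativity of $\Omega$ together with the Ozsv\'ath--Szab\'o genus-detection fact that $\Omega_J(s,t)=1$ if and only if $J$ is the unknot, precisely as in the paragraph preceding the proposition. Two small cosmetic points: the coefficients of $\Omega_K$ are dimensions, hence non-negative integers, so the ambient ring is $\Z[s^{\pm 1},t^{\pm 1}]$ rather than $\F_2[s^{\pm 1},t^{\pm 1}]$; and your step from ``$\dim\HFKh(J)=1$'' to ``$\Omega_J=1$'' (rather than merely $\Omega_J=s^at^b$) needs one more remark---for instance, the symmetry $c_{i,j}=c_{-i,\,j-2i}$ forces $a=b=0$---though the paper itself leaves this implicit.
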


\noindent Note there are algorithms for determining whether or not a   multivariable polynomial over the integers  is irreducible. \smallskip

This result  provides a primality test that has been effective in roughly 78\% of our experiments, which have included prime knots with up to 30 crossings.  There are a few refinements that make it  more effective, as we now describe.

According to~\cite{MR2065507}, the   polynomial $\Omega_K(s,t)  = \sum c_{i,j}s^jt^i $ satisfies the symmetry condition  $c_{i,j} = c_{-i, j-2i}$ for all $i$ and $j$.   We say that a factorization of a two-variable  Laurent polynomial is  {\it symmetric} if each factor satisfies this symmetry condition.  If no such nontrivial factorization exists, the polynomial is called {\it symmetrically irreducible}.

\begin{theorem} \label{thm-theorem2} If $\Omega_K(s,t) $ is symmetrically irreducible, then $K$ is prime. \end{theorem}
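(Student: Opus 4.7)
The plan is to argue the contrapositive, along the same lines as the basic primality test in the preceding proposition, but leveraging the extra fact that the symmetry condition $c_{i,j} = c_{-i,j-2i}$ holds for \emph{every} knot Floer polynomial, not just for $\Omega_K$ itself.

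First, I would suppose $K$ is composite and write $K = J_1 \cs J_2$ with neither $J_i$ the unknot. By multiplicativity of $\CFKh$ under connected sum, we have
\[ \Omega_K(s,t) = \Omega_{J_1}(s,t)\, \Omega_{J_2}(s,t). \]
Next I would invoke the symmetry statement recalled just before the theorem: since $J_1$ and $J_2$ are each knots, each of $\Omega_{J_1}$ and $\Omega_{J_2}$ individually satisfies the symmetry condition $c_{i,j} = c_{-i, j-2i}$. So this is by definition a \emph{symmetric} factorization of $\Omega_K$.

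It remains to check that this symmetric factorization is nontrivial, i.e.\ that neither $\Omega_{J_i}$ equals $1$. This is exactly where I would cite the Ozsv\'ath--Szab\'o genus detection result quoted earlier: $\Omega_J(s,t) = 1$ iff $J$ is the unknot. Since neither $J_i$ is the unknot, neither factor is $1$, and $\Omega_K$ admits a nontrivial symmetric factorization. Taking the contrapositive gives the theorem.

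There is no genuine obstacle here; the only subtlety worth flagging is conceptual, namely that the theorem is strictly stronger than the previous proposition only because the multiplicative decomposition arising from a connected sum is automatically compatible with the symmetry constraint. In particular, a polynomial may well be reducible in $\F_2[s^{\pm 1}, t^{\pm 1}]$ while being symmetrically irreducible, and the proof shows that it is the latter, weaker, condition which is truly needed to obstruct a connected-sum decomposition.
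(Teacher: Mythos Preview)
Your argument is correct and is exactly the one the paper intends: the paper does not spell out a separate proof of Theorem~\ref{thm-theorem2}, but simply records the symmetry property of $\Omega_J$ for any knot $J$ immediately before the statement, so that the theorem follows from multiplicativity and genus detection precisely as you describe.
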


It is known that knot Floer homology detects the torus knots $T(2,3)$ and $T(2,5)$ as well as the figure eight knot, $4_1$ (see~\cite{farber2023,MR2450204}).  More recently, it has been shown that it also detects the knots $5_2$ and the Whitehead double $\text{Wh}^+(T(2,3), 2)$     (see~\cite{2022arXiv220803307B}).   Another result of ~\cite{2022arXiv220803307B} is that $\HFKh(K)$ determines whether a knot is one of  $\text{Wh}^-(T(2,3),2)$ and  $15n_{43522}$, which have identical $\HFKh(K)$.  Lastly,~\cite{2022arXiv220803307B} proves that $\HFKh(K)$ detects membership in the set of pretzel knots $\{P(-3,3,2n+1)\}_{n \in \Z}$; more precisely, a knot  $K$ is in this family if and only if $\Omega_K(s,t) =   2st    + 5+	2s^{-1}t^{-1}$.

  Here is a    list  of  all nontrivial knots that are known to be determined by $\CFKh(K)$, along with their knot Floer polynomials  $\Omega_K(s,t)$.  In this list  we do not include mirror images, $m(K)$, since  $\Omega_{m(K)}(s,t) = \Omega_K(s^{-1},t^{-1})$. \smallskip
  
  \begin{itemize}
  \item {\makebox[\chartspace][l]{$T(2,3) \co$}} $t+ s^{-1} + s^{-2}t^{-1}$.\smallskip
  \item {\makebox[\chartspace][l]{$T(2,5) \co$}} $t^2 + s^{-1}t + s^{-2} + s^{-3}t^{-1} + s^{-4}t^{-2}$.\smallskip
  \item {\makebox[\chartspace][l]{$4_1 \co$}} $st + 3 + s^{-1}t^{-1}$.\smallskip
  \item {\makebox[\chartspace][l]{$5_2 \co$}} $2s^2 t + 3 s + 2t^{-1}$.\smallskip  
     \item {\makebox[\chartspace][l]{$Wh^+(T(2,3),2) \co$}}  $2s^{-1}t + (1+ 4s^{-2})+2s^{-3}t^{-1}$.\smallskip
  \item {\makebox[\chartspace][l]{$15n_{43522} \text{\ and\ } Wh^-(T(2,3),2)\co$}}  $2t+ (1+4s^{-1})+2s^{-2}t^{-1}$.\smallskip
  \item {\makebox[\chartspace][l]{$P(-3,3,2n+1)\co$}} $2st + 5 + 2s^{-1}t^{-1}$.

\end{itemize}
  \smallskip
  
\begin{theorem} \label{thm-theorem3} If $K$ is composite and  $\Omega_K(s,t) $ factors into exactly two symmetrically irreducible factors, one of which  is in the list above,  then $K$ has a factor that is the corresponding knot in the list. \end{theorem}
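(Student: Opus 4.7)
The plan is to combine three ingredients: multiplicativity of $\Omega_K(s,t)$ under connect sum, uniqueness of factorization of symmetric Laurent polynomials into symmetrically irreducible symmetric factors, and knot Floer detection of the listed knots.

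Since $K$ is composite, write $K = K_1 \cs K_2$ with each $K_i$ nontrivial.  By the Ozsv\'ath--Szab\'o genus detection result, $\Omega_{K_i}\ne 1$, and each $\Omega_{K_i}$ satisfies the symmetry condition $c_{i,j} = c_{-i,\,j-2i}$.  Multiplicativity gives $\Omega_K(s,t) = \Omega_{K_1}(s,t)\,\Omega_{K_2}(s,t)$.

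Next I would establish that a symmetric Laurent polynomial admits an essentially unique factorization into symmetrically irreducible symmetric factors.  The ring $\Z[s^{\pm 1},t^{\pm 1}]$ is a UFD, and the Floer symmetry extends to a $\Z$-algebra involution $*$ on this ring; $*$ permutes the irreducible factors of any polynomial, and the symmetrically irreducible symmetric polynomials correspond, up to units, to single $*$-orbits of irreducible factors appearing with multiplicity one.  Thus the symmetrically irreducible factorization of any symmetric polynomial is uniquely determined by its multiset of $*$-orbits of irreducible factors.  Applying this to $\Omega_K = \Omega_L \cdot P$ (two symmetrically irreducible factors by hypothesis), the refinement of $\Omega_{K_1}\Omega_{K_2}$ into symmetrically irreducible factors must also have exactly two factors, so each $\Omega_{K_i}$ is itself symmetrically irreducible.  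Uniqueness then forces $\{\Omega_{K_1},\Omega_{K_2}\} = \{\Omega_L,P\}$ up to symmetric units (which have the form $\pm s^a$); after relabeling and using the standard normalization of the Maslov grading, $\Omega_{K_1} = \Omega_L$.

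Finally, since $\HFKh$ detects each knot $L$ on the list by the cited results, and $\Omega_{K_1} = \Omega_L$ encodes the full bigraded structure of $\HFKh(K_1)$, we conclude $K_1 = L$, so $L$ is a connect-sum factor of $K$.  The main obstacle is formalizing the uniqueness step: verifying that $*$ is a $\Z$-algebra involution, classifying the symmetrically irreducible symmetric polynomials as single-orbit products with multiplicity one, and handling the symmetric-unit ambiguity.  Once that is in place, the rest of the argument is a direct application of multiplicativity and the cited detection theorems.
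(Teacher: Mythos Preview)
The paper does not give an explicit proof of this theorem; it is stated as an immediate consequence of multiplicativity of $\Omega$ together with the cited detection results for $\pm T(2,3)$, $\pm T(2,5)$, and $4_1$. Your three-step strategy (multiplicativity, uniqueness of symmetric factorization in the UFD $\Z[s^{\pm1},t^{\pm1}]$, then detection) is exactly the intended argument, and your identification of the unique-factorization step as the place needing care is accurate.

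There is, however, a genuine gap in how you dispose of the symmetric-unit ambiguity. You write that ``using the standard normalization of the Maslov grading, $\Omega_{K_1}=\Omega_L$,'' but this is not correct: both $\Omega_{K_1}$ and $\Omega_L$ already carry the absolute Maslov grading, and yet they can differ by a nontrivial symmetric unit $s^a$. Concretely, $\Omega_{-T(2,3)}=s^2\,\Omega_{T(2,3)}$ and $\Omega_{-T(2,5)}=s^4\,\Omega_{T(2,5)}$, so both members of each pair are legitimately normalized knot Floer polynomials in the same symmetric-unit class. Thus from $\Omega_{K_1}=s^a\Omega_L$ you cannot conclude $K_1=L$; for instance if $K=(-T(2,3))\cs J$ with $\Omega_J$ symmetrically irreducible, then $\Omega_K=\Omega_{T(2,3)}\cdot(s^2\Omega_J)$ satisfies the hypothesis with $L=T(2,3)$, yet $T(2,3)$ need not be a summand of $K$. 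What the argument actually yields is that $K_1$ has the same Alexander-graded $\HFKh$-ranks as $L$, hence (via genus and fiberedness detection, and the small total rank) $K_1\in\{L,-L\}$ for the torus knots and $K_1=4_1$ in that case. So you do get a summand from the list, but the conclusion ``the corresponding knot'' must be read up to mirroring within the list; the normalization step as you wrote it does not close this gap.
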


In practice, strong tools are available  that often can be used to prove that a knot  $K$  does not have any  of  these knots as factors;   multiplicative invariants such as the Jones polynomial are sufficient, though among prime knots with at most 15 crossings, we did find a few examples in which the HOMFLY polynomial (see~\cite{MR1472978}) was required.

For the set of knots $\{P(-3,3,2n+1)\}$, an exercise with the recursive property of the Jones polynomial $J_K(t)$ shows that  for all $n$,
     \[  J_{P(-3,3,2n+1)}(t) = t^2  J_{P(-3,3,2n-1)}(t) - t^2+ 1 .\]
     From this, one can show that the breadth of the Jones polynomial goes to infinite as $n$ becomes large, and thus checking whether the Jones polynomial of a given knot $K$ is divisible by the Jones polynomial of a knot in the set $\{P(-3,3,2n+1)\}$ is a   finite process  which is quickly done  in practice.

\vskip.1in

\noindent{\bf Summary of computations.}

In a complete survey  of prime knots of 15 or fewer crossings (of which there are 313,230),  Theorem~\ref{thm-theorem2} proved the primality of just under  $80\%$ of them.  Applying Theorem~\ref{thm-theorem3} increased that percentage to over 92\%.    For a random sampling of prime knots with 25 to 30 crossings  the success rate  was  above 95\%. Overall, it appears that the tests are slightly more effective for non-alternating knots.  

In the enumeration of prime knots, which has now been carried out through 20 crossings independently by Burton and Thistlethwaite, one of the most challenging aspects has been in proving the primality of those that are not hyperbolic.  Papers on this topic include~\cite{burton:LIPIcs:2020:12183, Thistlethwaite2023, MR787823}.  In that enumeration, of the more than two billion such knots, there are 1,315 that are not hyperbolic, including nine torus knots.  Applying Theorem~\ref{thm-theorem3} proves the primality of 1,285 of them, yielding a success rate of almost 98\%.

\section{Bar-complexes and further primality obstructions}\label{sec-bar}

An elementary homological result states that any finite-dimensional, graded, filtered chain complex is filtered chain homotopy equivalent to an essentially unique  {\it bar-complex}.  Here, by {\it bar-complex} we mean a complex along with a direct sum decomposition into one-dimensional complexes $(F_i)$    and two-dimensional complexes $(T_i \to B_i)$ which are acyclic as chain complexes and for which the filtration level of $B_i$ is strictly less than that of $T_i$.
In the language that is now common in persistent homology literature,  intervals bounded by the filtration levels of $B_i$ and $T_i$  are called {\it bars}. To simplify the discussion, we will   modify  this notation  and call the two-dimensional summands {\it bars}.
A bar is called {\it even} or {\it odd} depending on whether the grading of $B_i$ is even or odd, respectively.

Proofs of the existence and uniqueness of bar-complexes appear in such sources as~\cite{MR1310596, MR4249570}; going further back, see also~\cite{MR0001927}.
In forthcoming work, the authors will present a more general use of persistent
homology to study the doubly filtered complex $\CFKh(K)$;   see~\cite{allen2022upsilon} for background. In  knot
Floer theory literature, the bar-complex appears in the context of a reduced complex with a
vertically simplified basis; see, for example,~\cite[Definition
11.23]{MR3827056}. Here is a brief summary, in spirit close to that in~\cite{MR1310596}.

Choose an element $x$ of least filtration level for which $\partial x = y \ne 0$.  One can choose a filtered basis so  that $x$ and $y$ are  elements of that basis.  By adding $x$ to other filtered basis elements we can ensure that $x$ is the only  basis element whose boundary has a nontrivial $y$--coefficient.  The fact that $\partial^2 = 0$ implies that the pair $\{x, y\}$ generates a summand.  We now have that  $x$ and $y$ split off a two-dimensional summand.  If the filtration levels are the same, removing the summand is a filtered chain homotopy equivalence.  If the filtration levels are different, we have split off a  bar.

The number of bars which pair elements of specified filtrations levels can be expressed in terms of homological information derived from the maps $H_*(\CFKh(K, i)) \to H_*(\CFKh(K,j))$.  This implies the uniqueness of the bar-complex.

\subsection{Bar-complexes for $\CFKh(K)$ and connected sums}

Tensor products of bar-complexes arise naturally when considering connected sums of knots.  Thus, we will be taking advantage of the fact that the tensor product of two bar-complexes is  a bar-complex.  More precisely, we have the   following lemma, the proof of which is an easy computation that we leave to the reader.

\begin{lemma}\label{lem-bartensor}  The tensor product  of bars, $(T_1 \to B_1) \otimes (T_2 \to  B_2)$, is isomorphic to the direct sum of bars, $(X_1 \to Y_1) \oplus (X_2 \to Y_2)$,   where the grading and
filtration levels of $Y_1$ are the sums of those for $B_1$ and $B_2$ and the grading and filtration levels of
$X_2$ are the sums of those for $T_1$ and $T_2$. In particular, one of the summands is even and one is odd.

\end{lemma}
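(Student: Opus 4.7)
The plan is to exhibit an explicit filtered basis for the four-dimensional complex $(T_1 \to B_1) \otimes (T_2 \to B_2)$ that manifestly splits it as a direct sum of two bars, and then to read off the filtration, grading, and parity data.

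Over $\F_2$, the tensor product has basis $\{T_1 \otimes T_2,\ T_1 \otimes B_2,\ B_1 \otimes T_2,\ B_1 \otimes B_2\}$ with
\[ \partial(T_1 \otimes T_2) = T_1 \otimes B_2 + B_1 \otimes T_2, \qquad \partial(T_1 \otimes B_2) = \partial(B_1 \otimes T_2) = B_1 \otimes B_2. \]
Writing $a_i > b_i$ for the filtration levels of $T_i$ and $B_i$, I would assume without loss of generality that $a_1 + b_2 \geq b_1 + a_2$; the opposite case is handled by swapping the roles of the two tensor factors. The key step is to introduce the new basis
\[ X_1 = B_1 \otimes T_2, \quad Y_1 = B_1 \otimes B_2, \quad X_2 = T_1 \otimes T_2, \quad Y_2 = T_1 \otimes B_2 + B_1 \otimes T_2, \]
choosing $X_1$ to be the middle generator of smaller filtration. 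One then verifies directly that $\partial X_i = Y_i$ and $\partial Y_i = 0$, and that the span of the new basis elements at filtration $\leq k$ coincides with the original $F_k$ for every $k$; consequently $\{X_1, Y_1\}$ and $\{X_2, Y_2\}$ are complementary acyclic filtered subcomplexes.

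To finish, I compare filtrations and gradings. The bar condition holds: $\mathrm{filt}(X_1) = b_1 + a_2 > b_1 + b_2 = \mathrm{filt}(Y_1)$, and $\mathrm{filt}(X_2) = a_1 + a_2 > a_1 + b_2 = \mathrm{filt}(Y_2)$, where $\mathrm{filt}(Y_2) = \max(a_1 + b_2,\ b_1 + a_2) = a_1 + b_2$ by the WLOG. The identities for $Y_1 = B_1 \otimes B_2$ and $X_2 = T_1 \otimes T_2$ claimed in the statement hold tautologically. Finally, $\mathrm{gr}(Y_1) = \mathrm{gr}(B_1) + \mathrm{gr}(B_2)$ while $\mathrm{gr}(Y_2) = \mathrm{gr}(T_1) + \mathrm{gr}(B_2) = \mathrm{gr}(Y_1) + 1$, so the parities of the two bars — determined by the gradings of $Y_1$ and $Y_2$ — differ, and exactly one is even and one is odd. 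I anticipate no serious obstacle; the only real subtlety is that $X_1$ must be chosen as the middle generator of smaller filtration, or else the new basis would fail to respect the filtration on the tensor complex, which is why the WLOG assumption is needed.
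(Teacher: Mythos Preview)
Your proof is correct and is precisely the ``easy computation'' the paper leaves to the reader; the paper gives no argument of its own beyond that remark, so there is nothing further to compare.
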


As an unfiltered complex, the homology of $\CFKh(K)$ is one-dimensional, supported in grading 0.  (The corresponding filtration level  is the $\tau$--invariant defined by Ozsv\'ath--Szab\'o in~\cite{MR2026543}.)   For a knot $K$,  let $\widehat{C}(K)$ denote the  bar-complex representative of $\CFKh(K)$.  We have the following definitions.

\begin{itemize}

\item $\delta(K) = \dim   \widehat{C}(K)$.  One can see that $\delta(K)$ is the  total dimension of $\HFKh(K)$.

\item $b_e(K)$ and $b_o(K)$ are  the number of even  and odd bars, respectively,  in $\widehat{C}(K)$.  Note that $\delta(K) = 1 +2\left(b_e(K) + b_o(K)\right)$.

\end{itemize}

An easy counting argument proves the following result.

\begin{theorem}\label{thm-counts}
For any two knots $K_1$ and $K_2$,   $\delta(K_1 \cs K_2 ) = \delta(K_1)\delta(K_2)$.  Furthermore,
\begin{align*}
\be(K_1 \cs K_2) & = \be(K_1) + \be(K_2) + \frac{1}{4} (\delta(K_1) -1)(\delta(K_2) -1);\\
\bo(K_1 \cs K_2) & = \bo(K_1) + \bo(K_2) + \frac{1}{4} (\delta(K_1) -1)(\delta(K_2) -1).
\end{align*}
\end{theorem}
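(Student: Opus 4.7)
The plan is to exploit the multiplicativity $\CFKh(K_1 \cs K_2) \simeq \CFKh(K_1) \otimes \CFKh(K_2)$, together with the fact that the tensor product of two bar-complexes is again a bar-complex. The latter follows by distributing the tensor product across the direct-sum decompositions: the tensor of two one-dimensional summands is one-dimensional; the tensor of a one-dimensional summand with a bar is a bar (it is acyclic and two-dimensional, with the bottom element at a strictly smaller filtration level than the top, since the one-dimensional factor shifts both by the same amount); and by Lemma~\ref{lem-bartensor} the tensor product of two bars decomposes as the sum of two bars. Uniqueness of the bar-complex representative then gives $\widehat{C}(K_1 \cs K_2) \cong \widehat{C}(K_1) \otimes \widehat{C}(K_2)$, and the identity $\delta(K_1 \cs K_2) = \delta(K_1)\delta(K_2)$ is immediate from dimensions.

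To read off $\be$ and $\bo$, let $F_i$ denote the one-dimensional summand of $\widehat{C}(K_i)$ and let $\Gamma_i$ denote the direct sum of its bars. Distributing yields the four-fold decomposition
\[ \widehat{C}(K_1) \otimes \widehat{C}(K_2) \;\cong\; (F_1 \otimes F_2) \oplus (F_1 \otimes \Gamma_2) \oplus (\Gamma_1 \otimes F_2) \oplus (\Gamma_1 \otimes \Gamma_2). \]
The first piece is the one-dimensional summand of $\widehat{C}(K_1 \cs K_2)$. Since the generator of $F_i$ sits in Maslov grading $0$, tensoring any bar with $F_i$ preserves the grading, and hence the parity, of its bottom element. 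Consequently $F_1 \otimes \Gamma_2$ contributes $\be(K_2)$ even and $\bo(K_2)$ odd bars to $\widehat{C}(K_1 \cs K_2)$, and $\Gamma_1 \otimes F_2$ contributes $\be(K_1)$ even and $\bo(K_1)$ odd bars.

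The remaining piece $\Gamma_1 \otimes \Gamma_2$ is the direct sum, over all ordered pairs of a bar of $K_1$ and a bar of $K_2$, of their tensor products; by Lemma~\ref{lem-bartensor}, each such product contributes exactly one even and one odd bar. Using $\be(K_i) + \bo(K_i) = \tfrac{1}{2}(\delta(K_i) - 1)$, the number of such pairs equals $\tfrac{1}{4}(\delta(K_1) - 1)(\delta(K_2) - 1)$, so this is the contribution from $\Gamma_1 \otimes \Gamma_2$ to each of $\be$ and $\bo$. Summing the contributions from the three bar-producing pieces yields both stated formulas. The argument presents no real obstacle; the only step requiring care is the observation that tensoring with $F_i$ does not alter the parity of a bar, which depends on $F_i$ living in Maslov grading zero.
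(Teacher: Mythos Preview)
Your proof is correct and is precisely the ``easy counting argument'' the paper invokes without spelling out; the paper itself gives no further detail beyond Lemma~\ref{lem-bartensor} and the observation that the one-dimensional summand sits in grading~$0$, both of which you use in exactly the intended way.
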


Here is an evident  corollary.

\begin{corollary}\label{thm-evenodd} If $K$ is composite, then there is a
nontrivial factorization $\delta(K) = d_1 d_2$  for which
\[
(d_1-1)(d_2-1) \leq 4 \min (\be(K), \bo(K)).
\]
\end{corollary}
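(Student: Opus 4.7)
The plan is to deduce the corollary directly from Theorem~\ref{thm-counts}, so there is essentially nothing to do beyond bookkeeping. Since $K$ is composite, write $K = K_1 \cs K_2$ with both $K_1$ and $K_2$ nontrivial, and set $d_1 = \delta(K_1)$ and $d_2 = \delta(K_2)$. By the multiplicativity of $\delta$ in Theorem~\ref{thm-counts}, we have $\delta(K) = d_1 d_2$, so this is a candidate factorization.

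Next I would argue that the factorization is nontrivial. Because $\HFKh$ detects the unknot, a nontrivial knot $K_i$ has $\delta(K_i) > 1$; indeed, since $\delta = 1 + 2(\be + \bo)$ is always odd, each $d_i \geq 3$. Thus $(d_1, d_2)$ is a genuinely nontrivial factorization of $\delta(K)$.

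For the inequality, I would apply the formulas
\[
\be(K) = \be(K_1) + \be(K_2) + \tfrac{1}{4}(d_1 - 1)(d_2 - 1), \qquad
\bo(K) = \bo(K_1) + \bo(K_2) + \tfrac{1}{4}(d_1 - 1)(d_2 - 1),
\]
from Theorem~\ref{thm-counts}. Since $\be(K_i), \bo(K_i) \geq 0$, each of $\be(K)$ and $\bo(K)$ is bounded below by $\tfrac{1}{4}(d_1 - 1)(d_2 - 1)$. Taking the minimum and multiplying by $4$ yields
\[
(d_1 - 1)(d_2 - 1) \leq 4\min(\be(K), \bo(K)),
\]
which is the desired estimate.

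There is no real obstacle; the only item that requires a moment's thought is verifying the nontriviality of the factorization, which is handled by genus detection (equivalently, $\Omega_K = 1$ iff $K$ is the unknot, as invoked just before Proposition 1 in the excerpt). Everything else is a one-line substitution into the formulas of Theorem~\ref{thm-counts}.
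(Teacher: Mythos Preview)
Your proof is correct and matches the paper's intended argument: the paper calls this an ``evident corollary'' of Theorem~\ref{thm-counts} and gives no separate proof, so the bookkeeping you carry out---multiplicativity of $\delta$, nontriviality via genus detection, and the nonnegativity of $\be(K_i)$ and $\bo(K_i)$---is exactly what is being left to the reader.
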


\subsection{Applications of Corollary~\ref{thm-evenodd}}

\noindent{\bf $L$--space knots are prime.}
An immediate consequence is a theorem of Krcatovich~\cite[Theorem~1.2]{MR3404612}; subsequent proofs   have appeared in~\cite{Baldwin2018, Boyer2023JSJ}.

\begin{corollary}[Krcatovich]
\label{cor-l-space}
$L$-space knots are prime.
\end{corollary}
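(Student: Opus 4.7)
The plan is to derive the corollary directly from Corollary~\ref{thm-evenodd} after establishing that $\bo(K) = 0$ for any nontrivial $L$-space knot $K$. The key input is the staircase structure of $\HFKh$ for $L$-space knots due to Ozsv\'ath and Szab\'o: if $K$ is a nontrivial $L$-space knot of Seifert genus $g$, then $\HFKh(K)$ is supported at $2g+1$ distinct Alexander filtration levels $s_0 > s_1 > \cdots > s_{2g}$, each carrying a one-dimensional $\F_2$ summand, with the generator at $s_i$ sitting in Maslov grading $-i$.

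Next, I would observe that this presentation of $\CFKh(K)$ is already in bar-complex form. The generator at $s_0$, which lies in Maslov grading $0$, is the one-dimensional summand representing the generator of $\HFh(S^3)$; the remaining $2g$ generators pair into $g$ bars of the form $x_{s_{2k-1}} \to x_{s_{2k}}$ for $k = 1, \ldots, g$. Since the bottom of the $k$th bar has Maslov grading $-2k$, every bar is even, so $\be(K) = g$ and $\bo(K) = 0$.

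Finally, I would apply Corollary~\ref{thm-evenodd}. If $K$ were composite, the corollary would supply a nontrivial factorization $\delta(K) = d_1 d_2$ with $d_1, d_2 \geq 2$ satisfying
\[
(d_1 - 1)(d_2 - 1) \leq 4 \min(\be(K), \bo(K)) = 0,
\]
which is impossible. Hence $K$ must be prime.

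The only nontrivial step is invoking the staircase theorem; given it, the parity computation and the application of Corollary~\ref{thm-evenodd} are immediate, which is the sense in which the resulting proof is simple.
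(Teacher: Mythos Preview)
Your approach is exactly the paper's: deduce $\bo(K)=0$ from the Ozsv\'ath--Szab\'o structure theorem for $L$-space knots, then apply Corollary~\ref{thm-evenodd}. However, your statement of that structure theorem is inaccurate in two places, and your parity computation rests on the inaccuracy.

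First, the number of generators of $\HFKh(K)$ for an $L$-space knot is $2k+1$ for some $k$, not $2g+1$; the Alexander gradings need not be consecutive integers. Second, the Maslov gradings are not $0,-1,-2,\dotsc,-2k$ in general. For example, $T(3,4)$ has $g=3$ but only five generators, at Alexander gradings $3,2,0,-2,-3$ with Maslov gradings $0,-1,-2,-5,-6$. So your claim that ``the bottom of the $k$th bar has Maslov grading $-2k$'' is not literally correct.

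What Ozsv\'ath and Szab\'o actually prove (and what the paper invokes) is that the Maslov gradings are strictly increasing with the filtration level, \emph{alternate in parity}, and the top generator sits in grading $0$. That is enough: the single free generator is the one at the top filtration level, the bars pair the remaining consecutive generators as you describe, and since parities alternate starting from $0$ at the top, the bottom of every bar lands in an even grading. Hence $\bo(K)=0$, and the rest of your argument goes through verbatim. Replace your ``$M_i=-i$'' claim with the parity-alternation statement and the proof is complete and matches the paper's.
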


\begin{proof}
It follows from the proof of
\cite[Theorem~1.2]{MR2168576} that if $K$ is an $L$-space knot,  then $b_o(K) = 0$.  Here are a few details.  The theorem states that there is a  filtered basis for $\CFKh(K)$ consisting of single generators at a strictly increasing set of filtration levels $f_0, f_1, \ldots , f_{2k}$, for some $k$.  The gradings of these generators alternate in parity, are strictly increasing, and the top level generator is at grading 0.  The concluding paragraph of the proof of    \cite[Theorem~1.2]{MR2168576} shows that the homology group $H(\CFKh(K, f_i)) = 0$ for $i$ odd.  Thus, the bar-complex consists of a single generator summand at filtration level $f_{2k}$  and   bars $(T_i \to B_i)$ where $T_i$ is at filtration level $f_{2i+1}$ and $B_i$ is at filtration level $f_{2i}$ for $i = 0, ... , k-1$.  Each of these bars is even.
\end{proof}

\begin{remark}
Jonathan Hanselman has informed the authors that a converse holds: If $\bo
(K) = 0$, then $K$ is an $L$-space knot.
\end{remark}

  Successful applications of Corollary~\ref{thm-evenodd} to low-crossing prime knots are rare.  In fact, the only prime knots of 12 or fewer crossings that have primality detected by Corollary~\ref{thm-evenodd}  are   $L$--space knots.  For other low-crossing knots,   the values of $\be(K)$ and $\bo(K)$ are close in value, both roughly $\delta(K) /4$;  Corollary~\ref{thm-evenodd} can be applied when one of the two is close to 0.

From our experiments, it seems that $\be(K) - \bo(K)$ is  related to the {\it dealternating} number of $K$; that is, the minimum number of crossing changes required to convert $K$ into an alternating knot. (References for the dealternating number include~\cite{MR2726563,MR2443110,MR4620858}.) These examples provide little evidence upon which to base a conjecture and the topic remains open for future study.

\appendix

\section{Multiplicativity of $\Omega_K(s,t)$.}
The existence of a bar-complex representative of $\CFKh(K)$ implies the simpler fact that $\CFKh(K)$ has a reduced representative, one for which the boundary map strictly lowers filtration levels.  Let $\widehat{C}(K)$ denote such a reduced complex with an ordered, graded, filtered  basis.  Then the coefficient $c_{i,j}$ of $s^jt^i$ in $\Omega_K(s,t)$ is precisely the number of basis elements at grading $j$ and at  filtration level $i$.

Let $\widehat{C}^1$ and $\widehat{C}^2$ be two such based reduced knot complexes.  Their associated polynomials have coefficients which we denote by  $c_{i,j}^\epsilon$ for $\epsilon = 1, 2$.    The tensor product of  reduced complexes is  clearly reduced.  Denote the tensor product by $\widehat{D}$  and denote the coefficients of  its associated polynomial  by $d_{i,j}$.  A simple counting argument shows that $d_{m,n} = \sum_{ x+ y  = m}  \big(  \sum_{z + w = n} c_{x,z}^1 c_{y,w}^2\big)$.  This is precisely what arises when the polynomials are multiplied.



\bibliographystyle{amsplain}

\bibliography{../../../../BibTexComplete}

\end{document}